\begin{document}

\newtheorem{theorem}[subsection]{Theorem}
\newtheorem{proposition}[subsection]{Proposition}
\newtheorem{lemma}[subsection]{Lemma}
\newtheorem{corollary}[subsection]{Corollary}
\newtheorem{conjecture}[subsection]{Conjecture}
\newtheorem{prop}[subsection]{Proposition}
\numberwithin{equation}{section}
\newcommand{\mr}{\ensuremath{\mathbb R}}
\newcommand{\mc}{\ensuremath{\mathbb C}}
\newcommand{\dif}{\mathrm{d}}
\newcommand{\intz}{\mathbb{Z}}
\newcommand{\ratq}{\mathbb{Q}}
\newcommand{\natn}{\mathbb{N}}
\newcommand{\comc}{\mathbb{C}}
\newcommand{\rear}{\mathbb{R}}
\newcommand{\prip}{\mathbb{P}}
\newcommand{\uph}{\mathbb{H}}
\newcommand{\fief}{\mathbb{F}}
\newcommand{\majorarc}{\mathfrak{M}}
\newcommand{\minorarc}{\mathfrak{m}}
\newcommand{\sings}{\mathfrak{S}}
\newcommand{\fA}{\ensuremath{\mathfrak A}}
\newcommand{\mn}{\ensuremath{\mathbb N}}
\newcommand{\mq}{\ensuremath{\mathbb Q}}
\newcommand{\half}{\tfrac{1}{2}}
\newcommand{\f}{f\times \chi}
\newcommand{\summ}{\mathop{{\sum}^{\star}}}
\newcommand{\chiq}{\chi \bmod q}
\newcommand{\chidb}{\chi \bmod db}
\newcommand{\chid}{\chi \bmod d}
\newcommand{\sym}{\text{sym}^2}
\newcommand{\hhalf}{\tfrac{1}{2}}
\newcommand{\sumstar}{\sideset{}{^*}\sum}
\newcommand{\sumprime}{\sideset{}{'}\sum}
\newcommand{\sumprimeprime}{\sideset{}{''}\sum}
\newcommand{\sumflat}{\sideset{}{^\flat}\sum}
\newcommand{\shortmod}{\ensuremath{\negthickspace \negthickspace \negthickspace \pmod}}
\newcommand{\V}{V\left(\frac{nm}{q^2}\right)}
\newcommand{\sumi}{\mathop{{\sum}^{\dagger}}}
\newcommand{\mz}{\ensuremath{\mathbb Z}}
\newcommand{\leg}[2]{\left(\frac{#1}{#2}\right)}
\newcommand{\muK}{\mu_{\omega}}
\newcommand{\thalf}{\tfrac12}
\newcommand{\lp}{\left(}
\newcommand{\rp}{\right)}
\newcommand{\Lam}{\Lambda_{[i]}}
\newcommand{\lam}{\lambda}
\newcommand{\af}{\mathfrak{a}}
\newcommand{\sw}{S_{[i]}(X,Y;\Phi,\Psi)}
\newcommand{\lz}{\left(}
\newcommand{\pz}{\right)}
\newcommand{\bfrac}[2]{\lz\frac{#1}{#2}\pz}
\newcommand{\odd}{\mathrm{\ primary}}
\newcommand{\even}{\text{ even}}
\newcommand{\res}{\mathrm{Res}}

\theoremstyle{plain}
\newtheorem{conj}{Conjecture}
\newtheorem{remark}[subsection]{Remark}

\makeatletter
\def\widebreve{\mathpalette\wide@breve}
\def\wide@breve#1#2{\sbox\z@{$#1#2$}%
     \mathop{\vbox{\m@th\ialign{##\crcr
\kern0.08em\brevefill#1{0.8\wd\z@}\crcr\noalign{\nointerlineskip}%
                    $\hss#1#2\hss$\crcr}}}\limits}
\def\brevefill#1#2{$\m@th\sbox\tw@{$#1($}%
  \hss\resizebox{#2}{\wd\tw@}{\rotatebox[origin=c]{90}{\upshape(}}\hss$}
\makeatletter

\title[First moment of central values of quadratic Hecke $L$-functions in the Gaussian field]{First moment of central values of quadratic Hecke $L$-functions in the Gaussian field}

\author{Peng Gao and Liangyi Zhao}

\begin{abstract}
 We evaluate the smoothed first moment of central values of a family of qudratic Hecke $L$-functions in the Gaussian field using the method of double Dirichlet series.  The asymptotic formula we obtain has an error term of size $O(X^{1/4+\varepsilon})$ under the generalized Riemann hypothesis.  The same approach also allows us to obtain asymptotic formulas for all $X$, $Y$ for a smoothed double character sum involving $\sum_{N(m) \leq X, N(n)\leq Y}\leg {m}{n}$, where $\leg {\cdot}{n}$ denotes the quadratic symbol in the Gaussian field.
\end{abstract}

\maketitle

\noindent {\bf Mathematics Subject Classification (2010)}: 11M06, 11M41, 11N37, 11L05, 11L40, 11R11   \newline

\noindent {\bf Keywords}:  central values, Hecke $L$-functions, character sums, mean values, quadratic Hecke characters

\section{Introduction}
\label{sec 1}

  Moments of central values of families of $L$-functions have been intensively studied in the literature due to their broad arithmetic applications. For the family of quadratic Dirichlet $L$-functions, M. Jutila \cite{Jutila} obtained an asymptotic formula for the first moment with the main term of size $X \log X$ and an error term of size $O(X^{3/4+\varepsilon})$ for any $\varepsilon>0$. The same error term was later obtained by A. I. Vinogradov and L. A. Takhtadzhyan \cite{ViTa} and was improved to $O(X^{19/32 + \varepsilon})$ for any $\varepsilon>0$  by D. Goldfeld and J. Hoffstein \cite{DoHo}. In fact, an error term of size $O(X^{1/2 + \varepsilon})$ for the smoothed first moment is essentially implicit in \cite{DoHo} and this result was established by M. P. Young \cite{Young1} through a different approach using an recursive argument.
It was conjectured in \cite{DoHo} that the optimal error term should be $O(X^{1/4+\varepsilon})$ for the first moment studied therein. This has also been observed in a numerical study conducted by M. W. Alderson and M. O. Rubinstein \cite{AR12}. \newline

  The result of Goldfeld and Hoffstein was achieved using the method of double Dirichlet series via the usage of Eisenstein series of metaplectic type.  This appraoch gives the desired analytical property of the double Dirichlet series involved.  Later, A. Diaconu, D. Goldfeld and J. Hoffstein \cite{DGH} discussed the many advantages in viewing multiple Dirichlet series as functions of several complex variables.  This point of view was then applied in \cite{DGH} to study the third moment of central values of the family of quadratic Dirichlet $L$-functions. \newline

  Other than moments of $L$-functions, the method of multiple Dirichlet series can also be applied to study many related results. Take, for example, the following sum over quadratic Dirichlet characters given by
\begin{align*}
  S(X,Y)=\sum_{\substack {m \leq X \\ (m, 2)=1}}\sum_{\substack {n \leq Y \\ (n, 2)=1}} \leg {m}{n}_{\mz},
\end{align*}
  where $\left(\frac{m}{\cdot} \right)_{\mz}$ stands for the Kronecker symbol.
In \cite{CFS}, J. B. Conrey, D. W. Farmer and K. Soundararajan evaluated $S(X,Y)$ asymptotically by observing that an asymptotic formula of $S(X, Y)$ can be obtained via
the P\'olya-Vinogradov inequality if the sizes of $X$ and $Y$ are slightly different.  However, it is quite delicate to find such an asymptotic formula for $X$ and $Y$ of comparable size.  This difficulty was resolved in \cite{CFS} using a Poisson summation formula developed by Soundararajan \cite{sound1} so that a valid asymptotic formula for all $X, Y$ is established for $S(X, Y)$. \newline

In \cite{Cech}, M. \v Cech revisited the function $S(X,Y)$ from the vantage point of double Dirichlet series, applying the inverse Mellin transforms to express $S(X,Y)$ as integrals involving a double Dirichlet series. The corresponding double Dirichlet series appeared earlier in the work of V. Blomer \cite{Blomer11}, who established a subconvexity bound for it.  Using the analytical property of the double Dirichlet series obtained in \cite{Blomer11}, \v Cech was able to obtain an asymptotic formula for $S(X, Y)$ with a better error term. \newline

  We note that the method of Blomer also leads to the conjectured size of error term concerning the first moment of central values of the related quadratic family of Dirichlet $L$-functions under the generalized Riemann hypothesis (GRH).  In the present paper, we shall demonstrate a similar phenomenon in the setting of the Gaussian field.  We set $K=\mq(i)$ for the Gaussian field throughout the paper and $\mathcal{O}_K=\mz[i]$ for the ring of integers of $K$.  Let $N(n)$ stand for the norm of any $n \in K$ and $\zeta_K(s)$ the Dedekind zeta function of $K$.  It will be shown in Section~\ref{sec2.4} below that every ideal in $\mathcal{O}_K$ co-prime to $2$ has a unique generator congruent to $1$ modulo $(1+i)^3$ which is called primary.  Also, the quadratic symbol $\chi_m :=\left(\frac{m}{\cdot} \right)$ is defined in Section \ref{sec2.4}, which can be viewed as analogous to the Kronecker symbol in $K$. \newline

We fix a non-negative, smooth function $\Phi(x)$ compactly supported on ${\mr}_+$, where ${\mr}_+$ denotes the set of positive real numbers.  We are interested in the smoothed first moment of central values of the quadratic family of Hecke $L$-functions given by
\begin{align*}
    \sum_{m\odd} L \left( \frac 12, \chi_m \right) \Phi\left( \frac {N(m)}X \right).
\end{align*}

We apply the double Dirichlet series method in this paper to evaluate the above sum asymptotically.  For any function $f$, let $\hat f$ be the Mellin transform of $f$ given in \eqref{fMellin}.  Our result is as follows.
\begin{theorem}
\label{Thmfirstmomentatcentral}
	For all large $X$ and any $\varepsilon>0$, we have
\begin{align}
\label{FirstmomentSmoothed}
\begin{split}
	 \sum_{m\odd} L \left( \frac 12, \chi_m \right) \Phi\left( \frac {N(m)}X \right) =XP(\log X)+O\lz X^{\delta+\varepsilon}\pz ,
\end{split}
\end{align}
  where $P$ is a linear polynomial whose coefficients depend only on absolute constants and $\hat \Phi(1)$, $\hat \Phi'(1)$ and $\delta=1/2$.  If we assume the truth of GRH, then we can take $\delta=1/4$.
\end{theorem}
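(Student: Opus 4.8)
The plan is to realize the smoothed first moment as a contour integral of a double Dirichlet series in two complex variables and then exploit the analytic continuation and functional equations of that object. First I would write, for $\Re s$ large,
\begin{align*}
Z(s,w) = \sum_{m\odd} \frac{L(s,\chi_m)}{N(m)^w} \widetilde\Phi \quad \text{(formally)}, \qquad
L(s,\chi_m) = \sum_{n} \frac{\chi_m(n)}{N(n)^s},
\end{align*}
so that interchanging the two sums turns the coefficient into the character sum $\sum_{m\odd}\chi_m(n)$, whose behaviour in $n$ is governed by quadratic reciprocity in $\mathcal O_K$ (established in Section~\ref{sec2.4}). By Mellin inversion against $\Phi$, the quantity in \eqref{FirstmomentSmoothed} equals
\begin{align*}
\frac{1}{2\pi i}\int_{(c)} Z\lz \tfrac12, w\pz \, \hat\Phi(w)\, X^{w}\, \dif w
\end{align*}
for $c$ large, where $Z(\tfrac12,w)$ is the specialization at the central point of the double Dirichlet series. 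The task is then to move the $w$-contour to the left past the poles, which produce the main term $XP(\log X)$, and to bound the shifted integral by $O(X^{\delta+\varepsilon})$.

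The key structural input is the meromorphic continuation of $Z(s,w)$ to a tube domain in $\comc^2$, together with a group of functional equations: one in $s$ coming from the Hecke functional equation for each $L(s,\chi_m)$, one in $w$ coming from Poisson summation (Gauss sums) over $m$ for fixed $n$ — this is the Gaussian-field analogue of Soundararajan's Poisson formula used in \cite{CFS} and of Blomer's analysis in \cite{Blomer11} — and the composite relation obtained by alternating them. These yield continuation to a region containing the vertical line $\Re w = \tfrac34+\varepsilon$ (the relevant edge after setting $s=\tfrac12$), with polar divisors at $w=1$ (and a second pole contributing the $\log X$) carrying residues proportional to $\hat\Phi(1)$ and $\hat\Phi'(1)$; I would compute these residues explicitly in terms of $\zeta_K$ and an Euler product over primary primes. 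After collecting residues I move the contour to $\Re w = \tfrac34+\varepsilon$: a convexity-type bound for $Z(\tfrac12,w)$ on that line, combined with the rapid decay of $\hat\Phi(w)$ in vertical strips (since $\Phi\in C_c^\infty$), gives the unconditional error term $O(X^{1/2+\varepsilon})$, i.e. $\delta=\tfrac12$. Under GRH one has a Lindelöf-on-average / subconvexity bound for $L(\tfrac12,\chi_m)$ and for the dual sums, allowing the contour to be pushed to $\Re w=\tfrac12+\varepsilon$ and yielding $\delta=\tfrac14$; here one must be careful that the functional equations do not reintroduce poles in the strip $\tfrac12<\Re w<\tfrac34$.

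I expect the main obstacle to be establishing the analytic continuation of $Z(s,w)$ far enough to the left with good growth control, rather than just to the first functional-equation edge. Concretely: the character sum $\sum_{m\odd}\chi_m(n)$ does not factor cleanly — $\chi_m(n)$ depends on $n$ through a quadratic symbol twisted by reciprocity root-number factors — so after Poisson summation in $m$ one obtains Gauss sums $g(n,\cdot)$ whose own Dirichlet series must be understood; disentangling the squarefree/square decomposition of $n$ and isolating the diagonal term $n=\square$ (which gives $\zeta_K$-type poles) is the delicate bookkeeping step, exactly the point where \cite{sound1, CFS, Blomer11} do the real work. A secondary technical point is handling the $(1+i)$-part: restricting to $m$ primary and $n$ coprime to $2$ introduces local factors at the prime above $2$ that must be tracked through every functional equation. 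Once the continuation region and polynomial growth bounds are in place, the residue computation and the final contour shift are routine, and the GRH improvement follows by inserting the conditional bound at the appropriate stage.
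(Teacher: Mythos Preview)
Your overall framework---Mellin inversion to a double Dirichlet series, meromorphic continuation, contour shift, residue at a double pole giving $XP(\log X)$---matches the paper. But there are two substantive departures, and the second is a genuine gap.

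\textbf{Route to continuation.} You propose to obtain the $w$-functional equation by Poisson summation in $m$, and you anticipate the Gauss-sum Dirichlet series bookkeeping as the main obstacle. The paper sidesteps this entirely. It completes the bare double sum by a factor $\zeta_2(2s+2w-1)$, defining $Z(s,w)=\zeta_2(2s+2w-1)A(s,w)$, and then observes that quadratic reciprocity \eqref{quadrec} gives the exact symmetry $Z(s,w;\psi,\psi')=Z(w,s;\psi',\psi)$. Combined with the Hecke functional equation in $s$ (applied after writing $m=m_0m_1^2$ with $m_0$ squarefree), this already generates enough functional equations for continuation to all of $\comc^2$ with polar lines $s=1$, $w=1$, $s+w=3/2$. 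No Poisson summation, no Gauss-sum Dirichlet series; the $(1+i)$-local bookkeeping is handled by enlarging to the finite family $Z(s,w;\psi,\psi')$ with $\psi,\psi'$ running over the four characters in $\text{CG}$.

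\textbf{The contour and the role of GRH.} Here you have the mechanism wrong. At $s=\tfrac12$ the integrand is
\[
\frac{Z(\tfrac12,w)}{\zeta_2(2w)}\,X^{w}\,\hat\Phi(w),
\]
and the obstruction to shifting $w$ leftward is not the size of $Z(\tfrac12,w)$---Proposition~\ref{ExtensionOfZ(s,w)} gives unconditional polynomial growth in vertical strips---but the zeros of $\zeta_K(2w)$ in the denominator. Unconditionally one may only shift to $\Re(w)=\tfrac12+\varepsilon$, giving $\delta=\tfrac12$; your stated line $\Re(w)=\tfrac34+\varepsilon$ would only yield $X^{3/4+\varepsilon}$. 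Under GRH the zeros of $\zeta_K(2w)$ sit on $\Re(w)=\tfrac14$, so one shifts to $\Re(w)=\tfrac14+\varepsilon$ and obtains $\delta=\tfrac14$; your line $\Re(w)=\tfrac12+\varepsilon$ would only give $\delta=\tfrac12$ again. In particular, GRH enters solely through the bound $|\zeta_K(s)|^{-1}\ll|s|^{\varepsilon}$ for $\Re(s)\ge\tfrac12+\varepsilon$, not through any Lindel\"of or subconvexity input for $L(\tfrac12,\chi_m)$. This is precisely the point the paper stresses after the statement of the theorem: the error term is governed by the zero-free region of $\zeta_K$, and without the completing factor $\zeta_2(2s+2w-1)$ in the denominator you would not see why.
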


  As our main focus is the error term here, we omit the explicit expression of $P$ by pointing out that one can obtain it by noting that the function $A(s,w)$ defined in \eqref{Aswexp} has a double pole at $s=1$, $w=1/2$. A result similar to Theorem \ref{Thmfirstmomentatcentral} has been proved by the first-named author \cite{Gao20} using Young's recursive method.  The error term obtained in \cite{Gao20} is $O\lz X^{1/2+\varepsilon}\pz$ unconditionally.  We also remark here that, similar to what has been pointed out in \cite{DoHo} concerning quadratic Dirichlet $L$-functions, it seems unlikely that any improvement of  the $O$-term in \eqref{FirstmomentSmoothed} can be attained without some substantial enlargement of the zero-free region for the Dedekind zeta function.  Our proof of Theorem \ref{FirstmomentSmoothed} clearly illustrates this as one sees that our derivation of the error term largely depends on the Dedekind zeta function appearing in the denominator of \eqref{Firstmomentintrep}. \newline

   Our approach  in the proof of Theorem \ref{Thmfirstmomentatcentral} also allows us to study a sum analogous to $S(X, Y)$ involving with Hecke characters. For this, we fix another non-negative, smooth function $\Psi(x)$ that is compactly supported on ${\mr}_+$ and we define
\begin{align*}
    S_{[i]}(X,Y;\Phi,\Psi)=\sum_{m\odd}  \sum_{n\odd}\leg mn \Phi\left( \frac {N(m)}X \right) \Psi \left( \frac {N(n)}Y \right).
\end{align*}

   Our next result evaluates $S_{[i]}(X,Y;\Phi,\Psi)$ asymptotically.
\begin{theorem}
\label{MainTheoremSmoothed}
	For all large $X,Y,$ we have
\begin{equation*}
	 S_{[i]}(X,Y;\Phi,\Psi)=\frac {\pi^2}{48\zeta_K(2)}\cdot X^{3/2}\cdot D\lz\frac YX;\Phi,\Psi\pz+O_{\varepsilon}(XY^\delta+YX^\delta),
\end{equation*}
	where
\begin{align*}
\begin{split}
D(\alpha;\Phi,\Psi)=&\frac{\hat\Phi(1)\hat\Psi\bfrac12\alpha^{1/2}+\hat\Psi(1)\hat\Phi\bfrac12\alpha}{2}\\
 & \hspace*{1cm} +\frac 1{2\pi i} \int\limits_{(3/4)}\alpha^s\hat\Phi\lz\frac32-s\pz\hat\Psi(s)\pi^{1-3s}2^{2-2s}\frac {\Gamma(1-s)\Gamma(2s-1)}{\Gamma(2-2s)\Gamma(s)}\zeta_K(2s-1) \dif s.
\end{split}
\end{align*}
  and where $\delta=\varepsilon$ for any $\varepsilon>0$. If we assume the truth of GRH, then we can take $\delta=-1/4+\varepsilon$.
\end{theorem}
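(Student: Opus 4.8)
The plan is to express $S_{[i]}(X,Y;\Phi,\Psi)$ as a double contour integral of the double Dirichlet series
\[
Z(s,w)=\sum_{m\odd}\sum_{n\odd}\leg mn \frac{1}{N(m)^w N(n)^s},
\]
using the Mellin inversion formula for $\Phi$ and $\Psi$ twice. Concretely, writing $\Phi(N(m)/X)=\frac1{2\pi i}\int_{(c_1)}\hat\Phi(u)(X/N(m))^u\,\dif u$ and similarly for $\Psi$, one gets
\[
S_{[i]}(X,Y;\Phi,\Psi)=\frac{1}{(2\pi i)^2}\int\limits_{(c_1)}\int\limits_{(c_2)}X^u Y^v \hat\Phi(u)\hat\Psi(v) Z(v,u)\,\dif v\,\dif u,
\]
valid for $c_1,c_2$ large enough that the double sum converges absolutely. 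The key analytic input, which should already be available from the companion analysis in the paper (the same object underlies $A(s,w)$ in \eqref{Aswexp}, and the Gaussian analogue of Blomer's work), is the meromorphic continuation of $Z(s,w)$ to a region containing the relevant contours, together with a functional equation in each variable coming from Poisson summation (the Gaussian quadratic reciprocity law plus the theta-transformation), and polynomial growth bounds in vertical strips — unconditionally of a convexity-type strength, and sharper under GRH.

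Next I would move the $u$-contour to the left to pick up residues. The polar structure of $Z(v,u)$ governs everything: there is a polar line producing the $X^{3/2}$ main term, whose shape is dictated by the functional equation. Moving $\mathrm{Re}(u)$ past the pole at $u=3/2-v$ (and the pole of $Z$ in the $v$-variable, at $v=1$, contributing the two ``boundary'' terms $\hat\Phi(1)\hat\Psi(\tfrac12)\alpha^{1/2}$ and $\hat\Psi(1)\hat\Phi(\tfrac12)\alpha$ in $D(\alpha;\Phi,\Psi)$) produces the main term $\frac{\pi^2}{48\zeta_K(2)}X^{3/2}D(Y/X;\Phi,\Psi)$; the explicit integral in $D$ is exactly the residual contour integral along $\mathrm{Re}(s)=3/4$, and the Gamma-factor quotient $\Gamma(1-s)\Gamma(2s-1)/(\Gamma(2-2s)\Gamma(s))$ together with the power $\pi^{1-3s}2^{2-2s}$ is the archimedean part of the functional equation for the relevant Hecke $L$-function over $K$, with $\zeta_K(2s-1)$ the arithmetic factor. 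The constant $\pi^2/(48\zeta_K(2))$ comes from the residue of $\zeta_K$ at $1$ (which is $\pi/4$ for $\mathbb{Q}(i)$), the density of primary integers among odd integers, and the $2$-adic normalizing factors.

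After the main terms are extracted, the remaining integral is bounded by shifting the leftover contour as far left as the known continuation and growth estimates permit, and estimating trivially. Here the symmetry of the problem in the roles of $X$ and $Y$ (reflecting the functional equation $s\leftrightarrow$ something, $w\leftrightarrow$ something) forces the error to take the symmetric shape $XY^\delta+YX^\delta$: one bounds the contour integral by $X^{c_1}Y^{c_2}$ times an absolutely convergent integral (the rapid decay of $\hat\Phi,\hat\Psi$ on vertical lines controls convergence), optimizing $c_1$ near $1$ and $c_2$ as small as the analytic region allows, then swapping roles. Unconditionally this yields $\delta=\varepsilon$; under GRH the enlarged zero-free region for $\zeta_K$ (which sits in the denominator after the functional equation is applied) lets one push the contour to $c_2=-1/4+\varepsilon$, giving $\delta=-1/4+\varepsilon$. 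The main obstacle is precisely this last step: one must verify that $Z(s,w)$ really does continue past the naive boundary and that its growth there — which inherits a $1/\zeta_K$ factor from the functional equation — is controlled, so that the contour can be pushed to $\mathrm{Re}(s)=-1/4$; everything else is bookkeeping of residues and Gamma-factors.
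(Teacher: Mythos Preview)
Your proposal is correct and follows essentially the same route as the paper: double Mellin inversion, then successive contour shifts in $w$ and $s$ picking up the poles at $w=1$, $s=1$ (which, after a further shift of the resulting single integrals past $s=1/2$ and $w=1/2$, yield the two ``boundary'' terms in $D$) and the polar line $s+w=3/2$ (whose residue, computed via the functional equation of $Z$, gives the integral in $D$), with the leftover double integral bounded using polynomial growth of $Z$ and the bound on $1/\zeta_K$. One small clarification: the $1/\zeta_K$ factor governing the error does not come from the functional equation but is already present in the integrand as $A(s,w)=Z(s,w)/\zeta_2(2s+2w-1)$, and it is this denominator (together with Proposition~\ref{ExtensionOfZ(s,w)} for the continuation and polynomial bounds of $Z$) that allows the final contour to sit at $\Re(2s+2w-1)=1/2+\varepsilon$ under GRH.
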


   We note that sums similar to $S_{[i]}(X,Y;\Phi,\Psi)$ have already been evaluated by the authors in \cites{G&Zhao2019, G&Zhao2022-3}, but asymptotically only for certain ranges of $X$ and $Y$.  Theorem~\ref{MainTheoremSmoothed} gives a valid asymptotical formula for all $X$ and $Y$.

\section{Preliminaries}
\label{sec 2}

To prepare for the proofs of the theorems, we first include some auxiliary results needed in the paper.

\subsection{Quadratic residue symbol and quadratic Hecke character}
\label{sec2.4}
   Recall that $K=\mq(i)$ and it is well-known that $K$ has class number one.  We write $U_K=\{ \pm 1, \pm i \}$ and $D_{K}=-4$ for the group of units in $\mathcal{O}_K$ and the discriminant of $K$, respectively. We say an element $\varpi \in \mathcal{O}_K$ is prime if $(\varpi)$ is a prime ideal and an element $n \in \mathcal{O}_K$ is square-free if no prime squares divides $n$. We shall henceforth reserve the letter $\varpi$ for a prime in $\mathcal O_K$. Note that $n$ is square-free if and only if $\mu_{[i]}(n) \neq 0$, where $\mu_{[i]}$ is the M\"obius function on $\mathcal{O}_K$.  Note that $(1+i)$ is the only prime ideal in $\mathcal{O}_K$ that lies above the integral ideal $(2)$ in $\mz$ and we say an element $n \in \mathcal{O}_K$ is odd if $(n,1+i)=1$. \newline

   For $q \in \mathcal{O}_K$, let $\left (\mathcal{O}_K / (q) \right )^*$ denote the group of reduced residue classes modulo $q$, i.e., the multiplicative group of invertible elements in $\mathcal{O}_K / (q)$. Note that $\left (\mathcal{O}_K / ((1+i)^3) \right )^*$ is isomorphic to the cyclic group of order four generated by $i$. This implies that every ideal in $\mathcal{O}_K$ co-prime to $2$ has a unique generator congruent to $1$ modulo $(1+i)^3$.  This generator is called primary.  Furthermore, $\left (\mathcal{O}_K / (4) \right )^*$ is isomorphic to the direct product of a cyclic group of order $4$ and a cyclic group of order $2$.  More precisely, we have
\begin{align}
\label{resclassmod4}
  \left (\mathcal{O}_K / (4) \right )^* \cong  <i> \times <1+ 2(1+i)>.
\end{align}
  Note that the class $1 \pmod {(1+i)^3}$ gives rise to the two classes $1+2(1+i)k, k \in \{0,1 \}$ modulo $4$. It follows from this that an element $n=a+bi \in \mathcal{O}_K$ with $a, b \in \mz$ is primary if and only if $a \equiv 1 \pmod{4}, b \equiv
0 \pmod{4}$ or $a \equiv 3 \pmod{4}, b \equiv 2 \pmod{4}$. This result can be found in Lemma 6 on \cite[p. 121]{I&R}.  We shall refer to these two cases above as $n$ is of type $1$ and type $2$, respectively. \newline

   Let $0 \neq q \in \mathcal{O}_K$ and $\chi$ be a homomorphism:
\begin{align*}
  \chi: \left (\mathcal{O}_K / (q) \right )^*  \rightarrow S^1 :=\{ z \in \mc :  |z|=1 \}.
\end{align*}
     Following the nomenclature of \cite[Section 3.8]{iwakow}, we shall refer $\chi$ as a Dirichlet character modulo $q$. We say that such a Dirichlet character $\chi$ modulo $q$ is primitive if it does not factor through $\left (\mathcal{O}_K / (q') \right )^*$ for any divisor $q'$ of $q$ with $N(q')<N(q)$. \newline

   For any odd $n \in \mathcal{O}_{K}$, we write the symbol  $\leg {\cdot}{n}$ for the quadratic residue symbol modulo $n$ in $K$. For a prime $\varpi \in \mathcal{O}_{K}$
with $N(\varpi) \neq 2$, the quadratic symbol is defined for $a \in
\mathcal{O}_{K}$, $(a, \varpi)=1$ by $\leg{a}{\varpi} \equiv
a^{(N(\varpi)-1)/2} \pmod{\varpi}$, with $\leg{a}{\varpi} \in \{
\pm 1 \}$.  If $\varpi | a$, we define
$\leg{a}{\varpi} =0$.  Then the quadratic symbol is extended
to any composite odd $n$  multiplicatively. We further define $\leg {\cdot}{c}=1$ for $c \in U_K$. Recall that $\chi_c$ stands for the symbol $\leg {c}{\cdot}$, where we define $\leg {c}{n}=0$ when $1+i|n$. \newline

  The following quadratic reciprocity law (see \cite[(2.1)]{G&Zhao4}) holds for two co-prime primary elements $m$, $n \in \mathcal{O}_{K}$:
\begin{align}
\label{quadrec}
 \leg{m}{n} = \leg{n}{m}.
\end{align}
Moreover, we deduce from Lemma 8.2.1 and Theorem 8.2.4 in \cite{BEW} that the following supplementary laws hold for primary $n=a+bi$ with $a, b \in \mz$:
\begin{align}
\label{supprule}
  \leg {i}{n}=(-1)^{(1-a)/2} \qquad \mbox{and} \qquad  \hspace{0.1in} \leg {1+i}{n}=(-1)^{(a-b-1-b^2)/4}.
\end{align}

  Note that the quadratic symbol $\leg {\cdot}{n}$ defined above is a Dirichlet character modulo $n$. It follows from \eqref{supprule} that the quadratic symbol $\psi_i:=\leg {i}{\cdot}$ defines a primitive Dirichlet character modulo $4$.  Also,  \eqref{supprule} implies that  the quadratic symbol $\psi_{1+i}:=\leg {1+i}{\cdot}$ defines a primitive Dirichlet character modulo $(1+i)^5$. This can be inferred by noting that
\begin{align*}
  \left (\mathcal{O}_K / ((1+i)^5) \right )^* \cong  <i> \times <1+ 2(1+i)>\times <5>.
\end{align*}
As $5 \equiv 1 \pmod 4$ and $\psi_{1+i}(5)=-1$, we get that $\psi_{1+i}$ must be primitive. \newline

Furthermore, we observe that there exists a primitive quadratic Dirichlet character $\psi_2$ modulo $2$ since $\left (\mathcal{O}_K / (2) \right )^*$ is isomorphic to the cyclic group of order two generated by $i$.  Then we have $\psi_2(n)=-1$ for $n \equiv i \pmod 2$. \newline

   For any $l \in \mz$ with $4 | l$, we define a unitary character $\chi_{\infty}$ from $\mc^*$ to $S^1$ by:
\begin{align*}
  \chi_{\infty}(z)=\leg {z}{|z|}^l.
\end{align*}
  The integer $l$ is called the frequency of $\chi_{\infty}$.

  Now, for a given Dirichlet character $\chi$ modulo $q$ and a unitary character $\chi_{\infty}$, we can define a Hecke character $\psi$ modulo $q$ (see \cite[Section 3.8]{iwakow}) on the group of fractional ideals $I_K$ in $K$, such that for any $(\alpha) \in I_K$,
\begin{align*}
  \psi((\alpha))=\chi(\alpha)\chi_{\infty}(\alpha).
\end{align*}
If $l=0$, we say that $\psi$ is a Hecke character modulo $q$ of trivial infinite type. In this case, we may regard $\psi$ as defined on $\mathcal{O}_K$ instead of on $I_K$, setting $\psi(\alpha)=\psi((\alpha))$ for any $\alpha \in \mathcal{O}_K$. We may also write $\chi$ for $\psi$ as well, since we have $\psi(\alpha)=\chi(\alpha)$ for any $\alpha \in \mathcal{O}_K$. We then say such a Hecke character $\chi$ is primitive modulo $q$ if $\chi$ is a primitive Dirichlet character. Likewise, we say that  $\chi$ is induced by a primitive Hecke character $\chi'$ modulo $q'$ if $\chi(\alpha)=\chi'(\alpha)$ for all
$(\alpha, q')=1$. \newline

   We now define an abelian group $\text{CG}$ such that it is generated by three primitive quadratic Hecke characters of trivial infinite type with corresponding moduli dividing $(1+i)^5$.  More precisely,
\begin{align*}
  \text{CG}=\{ \psi_j : j=1, i, 1+i, i(1+i) \},
\end{align*}
and the commutative binary operation on $\text{CG}$ is given by $\psi_i \cdot \psi_{i(1+i)}=\psi_{1+i}$, $\psi_{1+i} \cdot \psi_{i(1+i)}=\psi_i$ and $\psi_j \cdot \psi_j=\psi_1$ for any $j$. As we shall only evaluate the related characters at primary elements in $\mathcal{O}_K$, the definition of such a product is therefore justified. \newline

  We note that the product of $\chi_c$ for any primary $c$ with any $\psi_j \in \text{CG}$ gives rise to a Hecke character of trivial infinite type. To determine the primitive Hecke character that induces such a product, we observe that every primary $c$ can be written uniquely as
\begin{align*}
 c=c_1c_2, \quad \text{$c_1, c_2$ primary and $c_1$ square-free}.
\end{align*}
The above decomposition allows us to conclude that if $c_1$ is of type 1, then $\chi_c \cdot \psi_j$ for $j \in \{1, i, 1+i, i(1+i)\}$ is induced by the primitive Hecke character $\leg {\cdot}{c_1}\cdot \psi_j$ with modulus $c_1, 4c_1, (1+i)^5c_1$ and $(1+i)^5c_1$ for $j=1, i, 1+i$ and $i(1+i)$, respectively. This is because that $\leg {\cdot }{c_1}$ is trivial on $U_K$ by \eqref{supprule}. Similarly, if $c_1$ is of type 2,  $\chi_c \cdot \psi_j$ for $j \in \{1, i, 1+i, i(1+i) \}$ is induced by the primitive Hecke character $\psi_j \cdot \psi_2 \cdot \leg {\cdot}{c_1}$ with modulus $2c_1, 4c_1, (1+i)^5c_1$ and $(1+i)^5c_1$ for $j=1, i, 1+i$ and $i(1+i)$, respectively. We make the convention that for any primary $n \in \mathcal O_K$, we shall use $\chi_n\cdot \psi_j$ for any $\psi_j \in \text{CG}$ to denote the corresponding primitive Hecke character $\chi$ that induces it throughout the paper.

\subsection{The approximate functional equation}
\label{sect: apprfcneqn}

  For any complex number $z$, we define\begin{align*}
 \widetilde{e}(z) =\exp \left( 2\pi i  \left( \frac {z}{2i} - \frac {\bar{z}}{2i} \right) \right) .
\end{align*}
  Let $\chi$ be a primitive quadratic Hecke character modulo $q$ of trivial infinite type of $K$ or a primitive Dirichlet character modulo $q$. We define the Gauss sum $g(\chi)$ associated to $\chi$  by
\begin{align*}
 g(\chi) = \sum_{x \bmod{q}} \chi(x) \widetilde{e}\leg{x}{q}.
\end{align*}

  Recall that $\zeta_K(s)$ denotes the Dedekind zeta function of $K$.  We also define $L_c(s, \chi)$, and also $\zeta_c(s)$, to be given by the Euler product defining $L(s, \chi)$ or $\zeta_K(s)$ but omitting those primes dividing $c$.  A well-known result of E. Hecke shows that $L(s, \chi)$ has an analytic continuation to the whole complex plane and satisfies the functional equation (see \cite[Theorem 3.8]{iwakow})
\begin{align}
\label{fneqn}
  \Lambda(s, \chi) = W(\chi)\Lambda(1-s, \chi),
\end{align}
  where
\begin{align}
\label{Lambda}
  \Lambda(s, \chi) = (|D_K|N(q))^{s/2}(2\pi)^{-s}\Gamma(s)L(s, \chi),
\end{align}
   and
\begin{align*}
  W(\chi) = g(\chi)(N(q))^{-1/2}.
\end{align*}

  As in the case of the classical quadratic Gauss sum over $\mz$, one expects that the Gauss sum  $g(\chi)$ associated to any primitive quadratic Hecke character $\chi$ modulo $q$ equals $N(q)^{1/2}$. Our next result confirms this in our situation.
\begin{lemma}
\label{Gausssumevaluation}
 With notations as above, for any primary, square-free $n \in \mathcal{O}_K$, let $\chi=\chi_n \cdot \psi_j$ with $\psi_j \in \text{CG}$ and $q$ be the modulus of $\chi$.  We have
\begin{align*}
   g(\chi)=N(q)^{1/2}.
\end{align*}
\end{lemma}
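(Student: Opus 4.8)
The plan is to show that $g(\chi)$ is a real number with $|g(\chi)|^2=N(q)$, and then to pin its sign down as $+$ by expressing $g(\chi)$ as a product of local Gauss sums through twisted multiplicativity.

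First, since $\chi$ is primitive modulo $q$, the standard evaluation of $g(\chi)\overline{g(\chi)}$ --- expanding the product and using primitivity to annihilate the terms indexed by $x-y$ not coprime to $q$ --- gives $|g(\chi)|^2=N(q)$. To see that $g(\chi)\in\mr$, note from the definition of $\widetilde{e}$ that $\overline{\widetilde{e}(x/q)}=\widetilde{e}(-x/q)$; since $\chi$ is real-valued, the substitution $x\mapsto -x$ yields $\overline{g(\chi)}=\chi(-1)g(\chi)$, and $\chi(-1)=1$ because every odd prime $\varpi$ of $\mathcal O_K$ has $N(\varpi)\equiv 1\pmod 4$ (hence $\leg{-1}{\varpi}=(-1)^{(N(\varpi)-1)/2}=1$, so $\chi_n(-1)=1$), while each $\psi_j$ and $\psi_2$ is quadratic and takes the value $1$ at $-1=i^2$. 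Thus $g(\chi)=\pm N(q)^{1/2}$, and the whole content of the lemma is that the sign is $+$.

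To get the sign I would invoke the twisted multiplicativity of Gauss sums: for coprime moduli $q_1,q_2$ and the corresponding factorization $\chi=\chi^{(1)}\chi^{(2)}$ into primitive characters one has $g(\chi)=\chi^{(1)}(q_2)\chi^{(2)}(q_1)\,g(\chi^{(1)})g(\chi^{(2)})$. Splitting $q$ into its odd primes $\varpi\mid n$ and its $(1+i)$-part --- a divisor of $(1+i)^5$ whose shape, and whether it carries a $\psi_2$-twist, is determined by the type of $n$ and by $\psi_j$ exactly as recorded in Section~\ref{sec2.4} --- then reduces the task to: (i) the local Gauss sum $g\big(\leg{\cdot}{\varpi}\big)$ at each odd prime $\varpi$; (ii) the Gauss sums $g(\psi)$ of the finitely many primitive characters $\psi$ of $(1+i)$-power conductor that occur (the products of $\psi_2$, $\psi_i$ and $\psi_{1+i}$); and (iii) the product of all the cross-terms. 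For (ii), the residue rings $\mathcal O_K/((1+i)^k)$ with $k\le 5$ are small and explicit, so a direct computation gives $g(\psi)=N(\mathrm{cond}\,\psi)^{1/2}$. For (i), I would identify $\mathcal O_K/(\varpi)$ with the finite field $\fief_{N(\varpi)}$: if $\varpi$ is inert, then $\fief_{N(\varpi)}=\fief_{p^2}$ with $p\equiv 3\pmod 4$, and the Hasse--Davenport relation together with Gauss's classical evaluation over $\fief_p$ gives $g\big(\leg{\cdot}{\varpi}\big)=N(\varpi)^{1/2}=\leg{i}{\varpi}\,N(\varpi)^{1/2}$ (using $\leg{i}{\varpi}=1$ for inert primary $\varpi$); if $\varpi=a+bi$ is split, then $\fief_{N(\varpi)}=\fief_p$ with $p\equiv 1\pmod 4$, and carrying $\widetilde{e}(\cdot/\varpi)$ through the isomorphism, Gauss's theorem gives $g\big(\leg{\cdot}{\varpi}\big)=\leg{b}{\varpi}\,N(\varpi)^{1/2}$, where $\leg{a}{\varpi}=1$ (as a Legendre symbol it equals $\leg{a}{N(\varpi)}$, which is $1$ by ordinary quadratic reciprocity since $N(\varpi)\equiv 1\pmod 4$ and $N(\varpi)=a^2+b^2\equiv b^2\pmod{|a|}$), so that, using $bi\equiv -a\pmod\varpi$, one gets $\leg{b}{\varpi}=\leg{a}{\varpi}\leg{i}{\varpi}=\leg{i}{\varpi}$. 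Hence in all cases
\begin{equation*}
 g\big(\leg{\cdot}{\varpi}\big)=\leg{i}{\varpi}\,N(\varpi)^{1/2},
 \qquad\text{so that}\qquad
 g\big(\leg{\cdot}{n}\big)=\leg{i}{n}\,N(n)^{1/2}
\end{equation*}
by multiplicativity.

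The last step, which I expect to be the fiddliest, is the cross-term bookkeeping in (iii). Cross-terms between two odd primes contribute $\prod_{\varpi\ne\varpi'}\leg{\varpi}{\varpi'}=1$ by the reciprocity law \eqref{quadrec}. The remaining cross-terms --- those pairing $n$ against the $(1+i)$-part --- together with the values $\psi_j(n)$ and $\psi_2(n)$, are all expressible through $\leg{i}{n}$ and $\leg{1+i}{n}$ by the supplementary laws \eqref{supprule} (for instance $\leg{2}{\varpi}=\leg{i}{\varpi}$ gives $\leg{2}{n}=\leg{i}{n}$, and $\psi_2(n)=1$ since $n\equiv 1\pmod 2$); when collected, they produce precisely the power of $\leg{i}{n}$ that cancels the factor $\leg{i}{n}$ carried by $g\big(\leg{\cdot}{n}\big)$. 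What survives is $g(\psi)\,N(n)^{1/2}=N(\mathrm{cond}\,\psi)^{1/2}N(n)^{1/2}=N(q)^{1/2}$, as claimed. I would carry out this final verification as a short case analysis over the type of $n$ and over $j\in\{1,i,1+i,i(1+i)\}$, using \eqref{quadrec}, \eqref{supprule} and the structure \eqref{resclassmod4} of $(\mathcal O_K/(4))^*$.
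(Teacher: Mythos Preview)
Your proposal is correct and follows the same core route as the paper: split $g(\chi)$ by twisted multiplicativity (equivalently, the Chinese remainder theorem) into the odd part and the $(1+i)$-power part, evaluate the small Gauss sums at $(1+i)$-power conductor by direct computation, and control the odd part through the identity $g\big(\leg{\cdot}{n}\big)=\leg{i}{n}\,N(n)^{1/2}$. The paper simply imports this last identity from \cite{G&Zhao3} (its display \eqref{gn}) and outsources the cases $j=1+i,\,i(1+i)$ to \cite{Gao2}, then handles $j=1,i$ by exactly the CRT splitting you describe; you instead derive the identity yourself---via Hasse--Davenport for inert $\varpi$ and the classical evaluation over $\fief_p$ for split $\varpi$, together with the pleasant computation $\leg{b}{\varpi}=\leg{i}{\varpi}$---and treat all four $\psi_j$ uniformly. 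Your preliminary reduction to a sign (showing $g(\chi)\in\mr$ with $|g(\chi)|^2=N(q)$ from $\chi(-1)=1$) is not used in the paper but is a clean way to frame the problem and a useful sanity check. The cross-term bookkeeping you flag as ``fiddliest'' is precisely what the paper carries out in the four displayed cases following \eqref{gn}, and your plan to resolve it via \eqref{quadrec}, \eqref{supprule}, $\leg{2i}{n}=1$, and $\psi_2(n)=1$ for $n\equiv 1\pmod 2$ matches those computations.
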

\begin{proof}
  First note that the assertion of the lemma is shown to be true for the case $j=1+i, i(1+i)$ in \cite[Lemma 2.2]{Gao2}. We may thus assume that $j=1, i$ in what follows and we note that in this case the Hecke character $\chi$ can be written as $\chi=\chi_c$ or $\chi_{ic}$ for a primary $c \in \mathcal O_K$. We recall from the display above \cite[(2.2)]{G&Zhao3} that for a primary $c \in \mathcal O_K$,
\begin{align}
\label{gn}
   g \Big(\leg {\cdot}{c} \Big )=\leg {i}{c}N(c)^{1/2}.
\end{align}
  It follows from this,  the definition of $\leg {c}{\cdot}$ and \eqref{supprule} that we have
$g(\leg {c}{\cdot})=1$ if $c$ is of type $1$. Moreover, the Chinese remainder theorem implies that $x = 4y +c z$ varies over the residue class modulo $4c$ when $y$ and $z$ vary over the residue class modulo $c$ and $4$, respectively. This and \eqref{gn} give that, when $c$ is of type $1$,
\begin{align*}
   g\Big(\leg {ic}{\cdot}\Big)=\leg {i}{c}\leg {4}{c}g\Big(\leg {i}{\cdot}\Big ) g\Big(\leg {\cdot}{c}\Big )=g\Big(\leg {i}{\cdot}\Big ) N(c)^{1/2}=N(2c)^{1/2},
\end{align*}
 where the last equality above follows from using the representation of $\left (\mathcal{O}_K / (4) \right )^*$ given in \eqref{resclassmod4} to see that $g(\leg {i}{\cdot} )=4$. \newline

On the other hand, if $c$ is of type 2, then the Chinese remainder theorem implies that $x = 2y +c z$ varies over the residue class modulo $2c$ when $y$ and $z$ vary over the residue class modulo $c$ and $2$, respectively.  As before, this, together with \eqref{gn}, implies that
\begin{align*}
   g\Big(\leg {c}{\cdot}\Big)=\psi_2(c)\leg {2}{c}g(\psi_{2}) g\Big(\leg {\cdot}{c}\Big )=\psi_2(c)\leg {2i}{c}g(\psi_{2}) N(c)^{1/2}.
\end{align*}
   Using that the representation of $\left (\mathcal{O}_K / (2) \right )^*$ can be chosen to consist of $1, i$, we see that $g(\psi_{2})=2$. Moreover, we have $\leg {2i}{c}=\leg {(1+i)^2}{c}=1$, and that $\psi_2(c)=1$ since we have $c \equiv 1 \pmod 2$. It follows that
\begin{align*}
   g\Big(\leg {c}{\cdot}\Big)=N(2c)^{1/2}.
\end{align*}

  Similarly, when $c$ is of type 2, we have
\begin{align*}
   g\Big(\leg {ic}{\cdot}\Big)=\leg {i}{c}\psi_2(c)\leg {4}{c}g\Big(\leg {i}{\cdot}\cdot \psi_2\Big ) g\Big(\leg {\cdot}{c} \Big )=g\Big(\leg {i}{\cdot}\cdot \psi_2 \Big ) N(c)^{1/2},
\end{align*}
  where we note that $\leg {i}{\cdot} \cdot\psi_2$ is a primitive Dirichlet character modulo $4$.  Using again the representation of $\left (\mathcal{O}_K / (4) \right )^*$ given in \eqref{resclassmod4}, we see via direct computation that $g(\leg {i}{\cdot}\cdot\psi_2 )=4$.
This completes the proof.
\end{proof}

We deduce from Lemma~\ref{Gausssumevaluation} that for any primary, square-free $c \in \mathcal{O}_K$ and any $\chi=\chi_c \cdot \psi_j$ with $\psi_j \in \text{CG }$, we have $W(\chi ) = 1$. We conclude from this, \eqref{fneqn} and \eqref{Lambda} that if such $\chi$ is of modulus $q$, then
\begin{align}
\label{fneqnL}
  L(1-s, \chi)=N(q)^{(2s-1)/2}\pi^{1-2s}\Gamma^{-1}(1-s)\Gamma(s)L(s, \chi).
\end{align}

\subsection{A mean value estimate for quadratic Hecke $L$-functions}
 In the proof of Theorem \ref{MainTheoremSmoothed}, we need the following lemma, a consequence of \cite[Corollary 1.4]{BGL}), which estimates from above the second moment of quadratic Hecke $L$-functions.
\begin{lemma}
\label{lem:2.3}
Suppose $s$ is a complex number with $\Re(s) \geq \frac{1}{2}$ and $|s-1|>\varepsilon$ for any $\varepsilon>0$. Then for any $\psi \in \text{CG}$,
\begin{align}
\label{L4est}
\sumstar_{\substack{(m,2)=1 \\ N(m) \leq X}} |L(s,\chi_{m}\psi)|^2
\ll (X|s|)^{1+\varepsilon}.
\end{align}
\end{lemma}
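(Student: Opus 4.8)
The plan is to reduce the claim to a known second-moment bound over primitive quadratic Hecke characters. First I would recall that every $\chi_m\psi$ with $m$ squarefree and odd, $\psi \in \text{CG}$, is (up to induction from a primitive character of conductor dividing a fixed power of $(1+i)$ times $m$) a primitive quadratic Hecke character of trivial infinite type, as set up in Section~\ref{sec2.4}. Passing from $m$ odd to $m$ odd and squarefree costs nothing, since $\mu_{[i]}(m)\neq 0$ is forced anyway in the family under consideration and a general odd $m$ differs from its squarefree kernel by a factor absorbed into the Euler product; at worst this multiplies the bound by $X^{\varepsilon}$ after the standard divisor-bound argument. Thus the sum on the left of \eqref{L4est} is, up to $O(X^{\varepsilon})$ factors, dominated by
\begin{align*}
\sum_{\substack{q \text{ primitive conductor} \\ N(q) \ll X}} |L(s,\chi_q)|^2,
\end{align*}
the sum being over primitive quadratic Hecke characters $\chi_q$ of trivial infinite type.

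Next I would invoke \cite[Corollary 1.4]{BGL}. That result is stated (in the usual formulation of such large-sieve-type estimates) as a bound for the second moment of $L(s,\chi)$ over primitive quadratic Hecke characters of modulus up to $X$, of the shape $\ll (X(1+|s|))^{1+\varepsilon}$ for $\Re(s)\geq \tfrac12$ bounded away from $s=1$; the restriction $|s-1|>\varepsilon$ is needed because $\zeta_K(s)$, which appears when $\chi$ is trivial or when one expands $|L|^2$ as a Dirichlet series and picks up the principal term, has a pole at $s=1$. The reduction step in the previous paragraph then transfers this bound to our family with the same quality, the extra factors from the finitely many characters $\psi_j$ and from the conductor bookkeeping being harmless. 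I would also note that the quadratic reciprocity law \eqref{quadrec} and the Gauss sum evaluation in Lemma~\ref{Gausssumevaluation} guarantee that the characters in question are genuinely quadratic with trivial root number, so that \cite{BGL} applies verbatim rather than to a twisted variant.

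The main obstacle is the bookkeeping in the reduction rather than any genuine analytic difficulty: one must be careful that the map $m \mapsto$ (primitive character inducing $\chi_m\psi$) is finite-to-one with controlled multiplicity, that the conductors stay $\ll X$ (the fixed power of $(1+i)$ and the possible type-$1$/type-$2$ distinction from Section~\ref{sec2.4} only change $N(q)$ by a bounded factor), and that completing $m$ from squarefree to general odd introduces only a convergent Euler product and a divisor factor $\ll X^{\varepsilon}$. Once these are checked, \eqref{L4est} follows immediately. If \cite[Corollary 1.4]{BGL} is instead stated for a short interval $X < N(m)\leq 2X$ or for exact conductor $N(q)=X$, I would simply dyadically decompose $N(m)\leq X$ into $O(\log X)$ ranges and sum, which again costs only $X^{\varepsilon}$.
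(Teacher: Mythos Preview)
Your approach is essentially the paper's: the paper gives no proof of this lemma at all, simply declaring it ``a consequence of \cite[Corollary 1.4]{BGL}'', and your proposal does exactly that---invoke \cite[Corollary 1.4]{BGL} after some bookkeeping to match the family $\{\chi_m\psi\}$ to the primitive quadratic characters covered there. One small remark: the $\sumstar$ in \eqref{L4est} already restricts to squarefree (primary) $m$, so your paragraph about ``passing from $m$ odd to $m$ odd and squarefree'' is unnecessary; otherwise the reduction you sketch (bounded conductor change from the $\psi_j$ and the type-$1$/type-$2$ cases, finite-to-one correspondence, dyadic decomposition if needed) is precisely the routine check the paper leaves implicit.
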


\section{Proof of Theorem \ref{Thmfirstmomentatcentral}}
\label{sec Poisson}

  We recall that the Mellin transform $\hat{f}$ for any function $f$ is defined to be
\begin{align}
\label{fMellin}
     \widehat{f}(s) =\int\limits^{\infty}_0f(t)t^s\frac {\dif t}{t}.
\end{align}

Repeated integration by parts yields that for the weight function $\Phi$ appearing on the left-hand side of \eqref{FirstmomentSmoothed} and any integer $E \geq 0$,
\begin{align} \label{h1bound}
 \hat \Phi(w) \ll  \frac{1}{(1+|w|)^{E}}.
\end{align}

Applying \eqref{h1bound} and the Mellin inversion shows that for $\Re(s)$ large enough,
\begin{align}
\label{MellinInversion}
\sum_{m\odd} L(s, \chi_m)\Phi \left( \frac {N(m)}X \right)=\frac1{2\pi i}\int\limits_{(\omega)}A(s,w)X^w\hat\Phi(w) \dif w,
\end{align}
  where we define $A(s,w)$, for $\Re(s)$ and $\Re(w)=\omega$ large enough, by the absolutely convergent double Dirichlet series
\begin{equation}
\label{Aswexp}
A(s,w)=\sum_{m\odd}\sum_{n\odd}\frac{\leg mn}{N(m)^wN(n)^s}.
\end{equation}

  We are thus led to study the analytical properties of $A(s,w)$. For this, it turns out that we need to extend our consideration to a larger class of functions.  In this vein, we define for two quadratic Hecke characters $\psi,\psi' \in \text{CG}$,
\begin{equation*}
 Z(s,w;\psi,\psi')=\zeta_2(2s+2w-1)\sum_{m\odd}\sum_{n\odd}\frac{\leg mn \psi(n)\psi'(m)}{N(m)^wN(n)^s}.
\end{equation*}

   We also introduce a $\mz$ module structure on $\text{CG}$ in the sense that we define for $\psi'' \in \text{CG}$ and $m, n \in \mz$,
\begin{equation*}
 Z(s,w;\psi,m\psi'+n\psi'')=mZ(s,w;\psi,\psi')+nZ(s,w;\psi,\psi'').
\end{equation*}
   We further define $Z(s,w)=Z(s,w;\psi_1,\psi_1)$ so that
\begin{equation}
\label{AZ}
A(s,w)=\frac {Z(s,w)}{\zeta_2(2s+2w-1)}.
\end{equation}

\subsection{Analytical Properties of $Z(s,w;\psi,\psi')$}

 Our next result describes the analytical properties of $Z(s,w;\psi,\psi')$.
\begin{proposition}
\label{ExtensionOfZ(s,w)}
	The function $Z(s,w;\psi,\psi')$ has a meromorphic continuation to the whole $\mc^2$ with a polar line $s+w=3/2$. There is an additional polar line at $s=1$ with residue $\res_{(1,w)}Z(s,w)=\pi \zeta_2(2w)/8$ if and only if $\psi=\psi_1$, and an additional polar line $w=1$ with residue $\res_{(s,1)}Z(s,w)=\pi \zeta_2(2s)/8$ if and only if $\psi'=\psi_1$. \newline
	
	The functions $(s-1)(w-1)(s+w-3/2)Z(s,w;\psi,\psi')$ are polynomially bounded in vertical strips, in the sense that for any $C_1>0$, there is a constant $C_2>0$ such that $|(s-1)(w-1)(s+w-3/2)Z(s,w;\psi,\psi')| \ll ((1+\Im(s))(1+\Im(w)))^{C_2}$ whenever $|\Im(s)|, |\Im(w)| \leq C_1$. The functions satisfy the functional equations,
\begin{align} \label{AswAws}
 Z(s,w;\psi,\psi')=Z(w,s;\psi',\psi).
\end{align}
  Additionally, if $\psi \neq \psi_1$, then
\begin{align} \label{Zfcneqnpsinot1}
  Z(1-s,s+w-\half;\psi,\psi')=\pi^{1-2s}N(\psi)^{(2s-1)/2}\frac{\Gamma(s)}{\Gamma(1-s)}Z(s,w;\psi,\psi'),
\end{align}
  where we denote $N(\psi)$ to be $N(q)$ with $q$ being the modulus of $\psi$. \newline

Moreover, if $\psi =\psi_1$, we have
\begin{align}
\label{Zfcneqnpsi1}
\begin{split}
 Z(1-s, & s+w-\half;\psi_1,\psi') \\
&= \frac 12 \pi^{1-2s} \frac{\Gamma(s)}{\Gamma(1-s)}  \Big ( \frac{1-2^{-(1-s)}}{2(1-2^{-s})}Z(s,w;\psi_1,\psi'(\psi_1 +\psi_{i})(\psi_1+ \psi_{1+i})) \\
& \hspace*{1cm} +\frac{1+2^{-(1-s)}}{2(1+2^{-s})}Z(s,w;\psi_1,\psi'(\psi_1 +\psi_{i})(\psi_1- \psi_{1+i})) + 2^{2s-1}Z(s,w;\psi_1, \psi'(\psi_1-\psi_{i}) \Big ).
\end{split}
\end{align}
\end{proposition}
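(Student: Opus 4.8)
The plan is to establish the analytic continuation and functional equations of $Z(s,w;\psi,\psi')$ by a two‑step bootstrap, first continuing in one variable using the quadratic reciprocity law \eqref{quadrec} and Hecke's functional equation \eqref{fneqnL}, then using the symmetry \eqref{AswAws} to continue in the other variable, and finally combining the two to obtain continuation to all of $\mc^2$ by a Bochner‑type tube argument. First I would fix $s$ with $\Re(s)$ large and regard $Z(s,w;\psi,\psi')$ as a function of $w$. Summing over $n$ inside, we recognise (up to the harmless Euler factors at $2$ absorbed in $\zeta_2(2s+2w-1)$ and the characters $\psi,\psi'$) a weighted average over $m \odd$ of $L(s,\chi_m\psi)$ twisted by $\psi'(m)/N(m)^w$; the quadratic residue symbol $\leg{m}{n}$ is, after inserting the reciprocity law \eqref{quadrec} and the supplementary rules \eqref{supprule}, a Hecke character in $m$ of modulus dividing $(1+i)^5 n$, which is exactly why the module structure on $\text{CG}$ had to be introduced. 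Applying the functional equation \eqref{fneqnL} to each $L(s,\chi_n\psi')$ — here the clean evaluation $W(\chi)=1$ from Lemma~\ref{Gausssumevaluation} is essential, since it makes the root number disappear — converts the $n$‑sum at $\Re(s)$ large into one at $\Re(1-s)$ large with $s$ replaced by $1-s$ and $w$ replaced by $s+w-\tfrac12$ (the shift in $w$ comes from the factor $N(q)^{(2s-1)/2}=N((1+i)^5 m)^{(2s-1)/2}$, whose $N(m)$‑part rescales the Dirichlet series in $m$). Collecting the contributions of the primary residue classes modulo $(1+i)^5$ — the ``type $1$''/``type $2$'' dichotomy and the values of $\psi_i,\psi_{1+i},\psi_2$ recorded in \eqref{supprule} — gives exactly \eqref{Zfcneqnpsinot1} when $\psi\neq\psi_1$, and the three‑term combination \eqref{Zfcneqnpsi1} when $\psi=\psi_1$, the extra terms and the $2$‑adic factors $\tfrac{1\pm2^{-(1-s)}}{2(1\pm2^{-s})}$, $2^{2s-1}$ coming from separating the contribution of $(1+i)\mid n$ versus $(1+i)\nmid n$ and from the local factor at $1+i$ in $\zeta_2(2s+2w-1)$.

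Next I would read off the poles. The series defining $Z$ converges absolutely for $\Re(s),\Re(w)$ large; the functional equation just derived continues it to a neighbourhood of $\Re(1-s)$ large, i.e. $\Re(s)$ near $-\infty$, and the symmetry \eqref{AswAws} does the same with the roles of $s$ and $w$ exchanged. The polar line $s+w=3/2$ arises from the pole of $\zeta_K$ (equivalently $\zeta_2$) hidden in the diagonal $m=n$ contribution: writing $\leg{m}{n}=\mathbbm 1_{m=n}+(\text{off‑diagonal})$ after opening the symbol, the diagonal yields $\zeta_2(2s+2w-1)\cdot\zeta_2(2(s+w))/\zeta_2(\ldots)$‑type factors with a pole at $2(s+w)-1=1$. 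The polar line $s=1$ with the stated residue $\pi\zeta_2(2w)/8$ appears precisely when the character $\chi_n\psi$ in the $n$‑variable is trivial for a positive‑density set of $n$ (namely $n$ a perfect square), which forces $\psi=\psi_1$; the residue is then computed from $\res_{s=1}\zeta_K(s)=\pi/4$ together with the index $[\,\mathcal O_K^\times:(\mathcal O_K^\times)^2\,]$ bookkeeping and the normalisation of primary generators, producing the constant $1/8$. By the $s\leftrightarrow w$ symmetry the same analysis gives the polar line $w=1$ with residue $\pi\zeta_2(2s)/8$ iff $\psi'=\psi_1$.

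To upgrade from ``continued in each variable separately'' to ``meromorphic on all of $\mc^2$'', I would multiply through by the polynomial $P(s,w)=(s-1)(w-1)(s+w-3/2)$ and apply the standard argument (as in Blomer and in \v Cech): $P(s,w)Z(s,w;\psi,\psi')$ is holomorphic on a union of tube domains $\{\Re(s)>A\}\cup\{\Re(w)>A\}\cup\{\Re(s)<1-A\}\cup\{\Re(w)<1-A\}$ (the last two via the functional equations), the convex hull of whose base is all of $\mr^2$ once $A$ is large, so by Bochner's tube theorem it extends to an entire function of $(s,w)$; the polynomial growth in vertical strips is inherited from the convexity bound for $\zeta_K$ and the subconvex‑type input, convexity‑interpolated between the region of absolute convergence and its reflection, giving the claimed bound with some $C_2=C_2(C_1)$. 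The main obstacle I anticipate is purely bookkeeping rather than conceptual: correctly tracking, in the passage through \eqref{fneqnL}, how the modulus of the primitive character inducing $\chi_n\psi'$ depends on whether the square‑free part of $n$ is of type $1$ or type $2$ and on which $\psi_j\in\text{CG}$ is present, and how the Gauss‑sum normalisation and the $1+i$‑local factors reorganise into the exact coefficients $\tfrac{1-2^{-(1-s)}}{2(1-2^{-s})}$, $\tfrac{1+2^{-(1-s)}}{2(1+2^{-s})}$ and $2^{2s-1}$ and the precise linear combinations of $\psi'(\psi_1\pm\psi_i)(\psi_1\pm\psi_{1+i})$ and $\psi'(\psi_1-\psi_i)$ in \eqref{Zfcneqnpsi1}; this is where the module structure on $\text{CG}$ and the conventions of Section~\ref{sec2.4} do all the work, and where a sign or an index slip would be easy to make.
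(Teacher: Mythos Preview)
Your overall strategy---sum the inner variable into an $L$-function, apply the functional equation \eqref{fneqnL} (with $W(\chi)=1$ from Lemma~\ref{Gausssumevaluation}), and then use the $s\leftrightarrow w$ symmetry plus a tube argument---is exactly the paper's approach. Two points in your account are off, however, and both matter.

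First, your explanation of the polar line $s+w=3/2$ as coming from a ``diagonal $m=n$ contribution'' is incorrect; there is no such diagonal (indeed $\leg{m}{m}=0$ for non-unit $m$), and the factor $\zeta_2(2s+2w-1)$ you allude to has its pole at $s+w=1$, not $3/2$. The polar line $s+w=3/2$ is instead the \emph{image}, under the functional equation $(s,w)\mapsto(1-s,s+w-\tfrac12)$, of the polar line at the second argument equal to $1$. Once you have located the poles at $s=1$, $w=1$ and established \eqref{Zfcneqnpsinot1}--\eqref{Zfcneqnpsi1}, the line $s+w=3/2$ is forced; it is not an independent contribution to be extracted from the Dirichlet series.

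Second, you omit the decomposition $m=m_0m_1^2$ with $m_0$ primary and square-free, which the paper writes down explicitly and which is essential. The functional equation \eqref{fneqnL} applies only to the \emph{primitive} character $\chi_{m_0}\psi$, whose conductor involves $N(m_0)$, not $N(m)$; the leftover sum over $m_1$ is what produces the factor $\zeta_2(2w)/L_2(s+2w,\chi_{m_0}\psi)$ in the paper's identity \eqref{Zrelation}, and without it your ``$N(m)$-part rescales the Dirichlet series'' heuristic does not literally work. Relatedly, your claim that the modulus is $(1+i)^5 m$ is wrong: for $\psi=\psi_1$ the modulus of $\chi_{m_0}$ is $m_0$ when $m_0$ is of type~1 and $2m_0$ when $m_0$ is of type~2, and it is precisely this dichotomy (detected via $\tfrac12(\psi_1\pm\psi_i)$ and then $\tfrac12(\psi_1\pm\psi_{1+i})$) that generates the three terms and the $2$-adic factors in \eqref{Zfcneqnpsi1}. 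The residue at $s=1$ likewise falls out cleanly from \eqref{Zrelation}: only the term $m_0=1$ contributes, giving $\zeta_2(2s+2w-1)\cdot\zeta_2(s)\zeta_2(2w)/\zeta_2(s+2w)$, whose residue at $s=1$ is $\pi\zeta_2(2w)/8$; no unit-group index bookkeeping is needed.

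Apart from these corrections (and some variable slips between $m$ and $n$), your outline matches the paper, and the Bochner-type continuation you sketch is in line with the references the paper invokes for that step.
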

\begin{proof}

   Note first that the functional equations in \eqref{AswAws} follow easily from the quadratic reciprocity law given in \eqref{quadrec}. To derive the other assertions, we write $m=m_0m_1^2$ with $m_0, m_1$ primary and $\mu_{[i]}^2(m_0)=1$ to get that (noting that $\psi'(m_0m_1^2)=\psi'(m_0)$ since the modulus of $\psi'$ divides $4$ and $m_1$ is primary)
\begin{align*}
	Z(s,w;\psi,\psi')=&
\zeta_2(2s+2w-1)\sum_{\substack{m_0\odd \\\mu_{[i]}^2(m_0)=1}}\sum_{\substack{m_1\odd}}\frac{\psi'(m_0)}{N(m_0)^wN(m_1)^{2w}} \sum_{n\odd}\frac{\leg {m_0m^2_1}n\psi(n)}{N(n)^s}.
\end{align*}

Now as
\begin{align*}
   \sum_{n\odd}\frac{\leg {m_0m^2_1}n\psi(n)}{N(n)^s}=\prod_{\substack{\varpi|m_1}}(1-\chi_{m_0}(\varpi)\psi(\varpi){N(\varpi)^{-s}})L_2(s, \chi_{m_0}\psi),
\end{align*}
we deduce that
\begin{align*}
 Z(s, & w;\psi,\psi') \\
 &= \zeta_2(2s+2w-1)\sum_{\substack{m_0\odd \\\mu_{[i]}^2(m_0)=1}}\sum_{\substack{m_1\odd}}\frac{\psi'(m_0)}{N(m_0)^wN(m_1)^{2w}} \prod_{\substack{\varpi|m_1}}(1-\chi_{m_0}(\varpi)\psi(\varpi){N(\varpi)^{-s}})L_2(s, \chi_{m_0}\psi).
\end{align*}

 Direct computation of the Euler factors renders
\begin{align}
\label{Zrelation}
   Z(s,w;\psi,\psi')=\zeta_2(2s+2w-1)\sum_{\substack{m_0\odd \\\mu_{[i]}^2(m_0)=1}}\frac {\psi'(m_0) L_2(s, \chi_{m_0}\psi)\zeta_2(2w)}{N(m_0)^w L_2(s+2w, \chi_{m_0}\psi)}.
\end{align}

Note that if $\psi \neq \psi_1$, $L_2(s, \chi_{m_0}\psi)= L(s, \chi_{m_0}\psi)$ and the functional equation in \eqref{fneqnL} leads to
\begin{align*}
  L_2(1-s, \chi_{m_0}\psi)=(N(m_0)N(\psi))^{(2s-1)/2}\pi^{1-2s}\frac{\Gamma(s)}{\Gamma(1-s)} L_2(s, \chi_{m_0}\psi).
\end{align*}
  Applying the above to \eqref{Zrelation}, the functional equations in \eqref{Zfcneqnpsinot1} follow. \newline

It still remains to consider the case $\psi = \psi_1$.  If $m_0$ is of type $2$, then $L_2(s, \chi_{m_0}\psi)= L(s, \chi_{m_0}\psi)$ and again applying \eqref{fneqnL} gives
\begin{align*}
  L_2(1-s, \chi_{m_0}\psi_1)=(4N(m_0))^{(2s-1)/2}\pi^{(1-2s)}\frac{\Gamma(s)}{\Gamma(1-s)} L_2(s, \chi_{m_0}\psi).
\end{align*}
  If $m_0$ is of type $1$, we have $L_2(s, \chi_{m_0}\psi)= (1-\chi_{1+i}(m_0)2^{-s})L(s, \chi_{m_0}\psi)$ and applying the functional equation  given in \eqref{fneqnL} leads to
\begin{align*}
  L_2(1-s, \chi_{m_0}\psi_1)= \frac{1-\chi_{1+i}(m_0)2^{-(1-s)}}{1-\chi_{1+i}(m_0)2^{-s}} N(m_0)^{(2s-1)/2}\pi^{1-2s}\frac{\Gamma(s)}{\Gamma(1-s)} L_2(s, \chi_{m_0}\psi_1).
\end{align*}
Thus if $\psi = \psi_1$, we obtain
\begin{align*}
\begin{split}
Z(1-s, & s+w-\half;\psi_1,\psi') \\
& =\pi^{1-2s} \frac{\Gamma(s)}{\Gamma(1-s)}\zeta_2(2s+2w-1) \Big ( \sum_{\substack{m_0\odd \\\mu_{[i]}^2(m_0)=1 \\ m_0 \text{ of type 1}}} \frac{(1-\chi_{1+i}(m_0)2^{-(1-s)})}{(1-\chi_{1+i}(m_0)2^{-s})}\frac {\psi'(m_0) L_2(s, \chi_{m_0}\psi_1)\zeta_2(2w)}{N(m_0)^w L_2(s+2w, \chi_{m_0}\psi_1)} \\
& \hspace*{6cm} +\sum_{\substack{m_0\odd \\\mu_{[i]}^2(m_0)=1 \\ m_0 \text{ of type 2}}}2^{2s-1}\frac {\psi'(m_0) L_2(s, \chi_{m_0}\psi_1)\zeta_2(2w)}{N(m_0)^w L_2(s+2w, \chi_{m_0}\psi_1)} \Big ).
\end{split}
\end{align*}

   We apply the character sums $\frac 12 (\psi_1(m_0) \pm \psi_{i}(m_0))$ to detect $m_0$'s type and recast the above as
\begin{align*}
\begin{split}
Z(1-s, & s+w-\half;\psi_1,\psi') \\
& =\frac 12 \pi^{1-2s}\frac{\Gamma(s)}{\Gamma(1-s)}\zeta_2(2s+2w-1) \\
& \hspace*{1cm} \times  \sum_{\substack{m_0\odd \\ \mu_{[i]}^2(m_0)=1 }} \Big( \frac{1-\chi_{1+i}(m_0)2^{-(1-s)}}{1-\chi_{1+i}(m_0)2^{-s}}\frac {\psi'(m_0)(\psi_1(m_0) +\psi_{i}(m_0)) L_2(s, \chi_{m_0}\psi_1)\zeta_2(2w)}{N(m_0)^w L_2(s+2w, \chi_{m_0}\psi_1)} \\
& \hspace*{2cm} +2^{2s-1}\frac {\psi'(m_0)(\psi_1(m_0)-\psi_{i}(m_0)) L_2(s, \chi_{m_0}\psi_1)\zeta_2(2w)}{N(m_0)^w L_2(s+2w, \chi_{m_0}\psi_1)} \Big ).
\end{split}
\end{align*}

Now $\frac 12 (\psi_1(m_0) \pm \psi_{1+i}(m_0))$ can be used to detect $\psi_{1+i}(m_0)=\pm 1$, further recasting the above as
\begin{align*}
\begin{split}
Z(&1-s,  s+w-\half;\psi_1,\psi') \\
 &=\frac 12 \pi^{1-2s} \frac{\Gamma(s)}{\Gamma(1-s)} \zeta_2(2s+2w-1) \\
& \hspace*{1cm} \times \sum_{\substack{m_0\odd \\ \mu_{[i]}^2(m_0)=1 }} \Big ( \frac{1-2^{-(1-s)}}{2(1-2^{-s})}\frac {\psi'(m_0)(\psi_1(m_0) +\psi_{i}(m_0))(\psi_1(m_0) + \psi_{1+i}(m_0)) L_2(s, \chi_{m_0}\psi_1)\zeta_2(2w)}{N(m_0)^w L_2(s+2w, \chi_{m_0}\psi_1)} \\
&\hspace*{1cm} +\frac{1+2^{-(1-s)}}{2(1+2^{-s})}\frac {\psi'(m_0)(\psi_1(m_0) +\psi_{i}(m_0))(\psi_1(m_0) - \psi_{1+i}(m_0)) L_2(s, \chi_{m_0}\psi_1)\zeta_2(2w)}{N(m_0)^w L_2(s+2w, \chi_{m_0}\psi_1)} \\
&\hspace*{1cm} +2^{2s-1}\frac {\psi'(m_0)(\psi_1(m_0)-\psi_{i}(m_0)) L_2(s, \chi_{m_0}\psi_1)\zeta_2(2w)}{N(m_0)^w L_2(s+2w, \chi_{m_0}\psi_1)} \Big ).
\end{split}
\end{align*}

   We now rewrite the right-hand side expression above in terms of $Z(s,w;\psi_1,\psi'')$ for various $\psi''$ to see that the functional equations given in \eqref{Zfcneqnpsi1} follow. \newline

Next Cauchy's inequality and \eqref{L4est} imply that for any $\psi \in \text{CG}$ and any $\Re(s) \geq \frac{1}{2}$, $|s-1|>\varepsilon$,
\begin{align*}
\sumstar_{\substack{(m,2)=1 \\ N(m) \leq X}} |L(s,\chi_{m}\psi)|
\ll X^{1+\varepsilon} |s|^{1/2+\varepsilon}.
\end{align*}
  Applying the functional equation of $L(s, \chi_{m_0}\psi)$ if $\Re(s)<1/2$, we deduce from the above and partial summation that when $\psi \neq \psi_1$, the sum on the right-hand side of \eqref{Zrelation} converges absolutely in the region
$$\{(s,w):\Re(w)>1\text{ and }\Re(s+w)>3/2\}.$$
Now when $\psi=\psi_1$, the summand in \eqref{Zrelation} with $m_0=1$ is $\frac{\zeta_2(s)\zeta_2(2w)}{\zeta_2(s+2w)}$, which implies that there is a pole of $Z(s,w;\psi_1,\psi')$ at $s=1$. Given that
the residue of $\zeta_K(s)$ at $s = 1$ equals $\pi/4$ (see \cite[Theorem 3.8]{iwakow}), it follows that $\res_{(1,w)}Z(s,w;\psi_1, \psi')=\pi \zeta_2(2w)/8$. \newline

As the remaining assertions of the theorem can be established in manners similar to those of \cite[Lemma 2]{Blomer11}, \cite[Theorem 3.1]{Cech} and \cite[Proposition 4.11]{DGH}, we omit here.
\end{proof}

\subsection{Completion of Proof of Theorem \ref{Thmfirstmomentatcentral}}

   It follows from \eqref{AZ} and Proposition \ref{ExtensionOfZ(s,w)} that the right-hand side expression of \eqref{MellinInversion} is holomorphic for $s=1/2$ and $\Re(w)$ large enough. As the left-hand side expression of \eqref{MellinInversion} is meromorphic for all $s$, we conclude that they agree at $s=1/2$, which implies that for large $\omega$,
\begin{align}
\label{Firstmomentintrep}
\sum_{m\odd} L(\tfrac 12, \chi_m)\Phi \left(\frac {N(m)}X \right)=\frac1{2\pi i}\int\limits_{(\omega)}A(\tfrac 12,w)X^w\hat\Phi(w) \dif w=\frac1{2\pi i}\int\limits_{(\omega)}\frac {Z(\frac 12,w)}{\zeta_2(2w)}X^w\hat\Phi(w) \dif w.
\end{align}

  We now shift the last integral above to $\Re(w)=1/2+\varepsilon$ to encounter a double pole at $w=1$ by Proposition \ref{ExtensionOfZ(s,w)}. Calculating the residue then leads to the main term given in \eqref{FirstmomentSmoothed}.  We apply \eqref{h1bound} and Proposition \ref{ExtensionOfZ(s,w)} again to see that the integration on the new line is bounded by the error term given in \eqref{FirstmomentSmoothed} with $\delta=1/2$. \newline

If we assume the truth of GRH, then we can shift the integral in \eqref{Firstmomentintrep} further to the line $\Re(w)=1/4+\varepsilon$.  GRH implies (see \cite[Theorem 5.19]{iwakow}) that for any $\varepsilon>0$,
\begin{align*}
\begin{split}
      \frac1{\zeta_K(s)|}\ll & |s|^{\varepsilon}, \quad \Re(s) \geq 1/2+\varepsilon.
\end{split}
\end{align*}
Applying the above bound, together with \eqref{h1bound}, gives that the integral on the new line is bounded by the error term given in \eqref{FirstmomentSmoothed} with $\delta=1/4$ under GRH. This completes the proof of Theorem \ref{Thmfirstmomentatcentral}.

\section{Proof of Theorem \ref{MainTheoremSmoothed}}

We apply the Mellin inversion to $\sw$ twice.  This gives that for $\Re(s)=\sigma$, $\Re(w)=\omega$ large enough,
\begin{equation*}
\sw=\bfrac1{2\pi i}^2\int\limits_{(\sigma)}\int\limits_{(\omega)}A(s,w)X^wY^s\hat\Phi(w)\hat\Psi(s) \dif w \dif s.
\end{equation*}

Using the relation \eqref{AZ} between $A(s,w)$ and $Z(s,w)$, we recast the above as
\begin{equation}
\label{sw}
\sw=\bfrac1{2\pi i}^2\int\limits_{(2)}\int\limits_{(2)}\frac {Z(s,w)}{\zeta_2(2s+2w-1)}X^wY^s\hat\Phi(w)\hat\Psi(s) \dif w \dif s.
\end{equation}
 Here we set the lines of integration on $\Re(s)=\Re(w)=2$ so that $Z(s, w)$ converges absolutely.  Noticing that the estimation given in \eqref{h1bound} continues to hold with $\hat \Phi$ replaced by $\hat\Psi$. Using this bound for both $\hat \Phi$ and $\hat\Psi$ together with the polynomial bound of $Z(s,w)$ in vertical strips given in Proposition \ref{ExtensionOfZ(s,w)}, we may shift the integral over $w$ in \eqref{sw} to the line $\Re(w)=3/4+\varepsilon$, passing a simple pole at $w=1$. Applying Proposition \ref{ExtensionOfZ(s,w)} for the residue, we thus obtain
\begin{align}
\label{ShiftingToOmega'}
\sw=& \lz \frac1{2\pi i}\pz^2\int\limits_{(2)}\int\limits_{(3/4+\varepsilon)}\frac {Z(s,w)}{\zeta_2(2s+2w-1)}X^wY^s\hat \Phi(w)\hat\Psi(s) \dif w \dif s +\frac{\pi\hat\Phi(1)X}{8}\cdot \frac{1}{2\pi i}\int\limits_{(2)}\frac{Y^s\hat\Psi(s)\zeta_2(2s)}{\zeta_2(2s+1)} \dif s.
\end{align}

  We estimate the last integral in \eqref{ShiftingToOmega'} by further shifting to the line $\Re(s)=\delta$ with $\delta=-\frac14+\varepsilon$ or $\delta=\varepsilon$ if GRH is true.  We encounter a simple pole at $s=1/2$ and after computing the residue, we obtain
\begin{equation}
\label{ComputingContributionOfFirstResidue}
\begin{aligned}
\frac{\pi\hat\Phi(1)X}{8}\cdot \frac{1}{2\pi i}\int\limits_{(2)}\frac{Y^s\hat\Psi(s)\zeta_2(2s)}{\zeta_2(2s+1)}\dif s=\frac{\pi^2\hat\Phi(1)\hat\Psi\bfrac12XY^{1/2}}{128\zeta_2(2)}+\frac{\pi\hat\Phi(1)X}{8}\cdot \frac{1}{2\pi i}\int\limits_{(\delta)}\frac{Y^s\hat\Psi(s)\zeta_2(2s)}{\zeta_2(2s+1)} \dif s.
\end{aligned}
\end{equation}

   To facilitate our estimation of the last integral above and other similar integrals in the sequel, we gather here a few bounds on $\zeta_K(s)$ and $L(s, \chi)$ for any Hecke character $\chi$ modulo $q$ of trivial infinite type. We have
\begin{align}
\label{zetaest}
\begin{split}
  \zeta_K(s) \ll & \left( 1+|s|^2 \right)^{1-\Re(s)/2+\varepsilon}, \quad 0 \leq \Re(s) \leq 1, \\
  \big|L(\tfrac 12+\varepsilon+it, \chi)\big|^{-1} \ll & \big(N(q)(1+|t|)\big)^{\varepsilon}, \quad \text{on GRH}.
\end{split}
\end{align}
  The first inequality in \eqref{zetaest} is the convexity bound for $\zeta_K(s)$ (see \cite[Exercise 3, p. 100]{iwakow}) and the second follows from  \cite[Theorem 5.19]{iwakow}.  The implied constants in both of the estimates depend on $\varepsilon$ alone. \newline

Applying the bounds given in \eqref{zetaest} on the last integral in \eqref{ComputingContributionOfFirstResidue},  we arrive at
\begin{equation*}
\frac{\pi\hat\Phi(1)X}{8}\cdot \frac{1}{2\pi i}\int\limits_{(2)}\frac{Y^s\hat\Psi(s)\zeta_2(2s)}{\zeta_2(2s+1)} \dif s=\frac{\pi^2\hat\Phi(1)\hat\Psi\bfrac12XY^{1/2}}{128\zeta_2(2)}+O\lz XY^{\delta}\pz.
\end{equation*}
This and \eqref{ShiftingToOmega'} render
\begin{align*}
\sw=&\lz \frac1{2\pi i}\pz^2\int\limits_{(2)}\int\limits_{(3/4+\varepsilon)}\frac {Z(s,w)}{\zeta_2(2s+2w-1)}X^wY^s\hat \Phi(w)\hat\Psi(s) \dif w \dif s +\frac{\pi^2\hat\Phi(1)\hat\Psi\bfrac12XY^{1/2}}{128\zeta_2(2)}+O(XY^{\delta}).
\end{align*}

  We now interchange the order of integrations in the double integral above and shift the integral over $s$ to $\Re(s)=3/4$, crossing the polar line at $s=1$.  Computing the residue as before, we see that
\begin{align}
\label{TheoremAfter2Shifts}
\begin{split}
	\sw =& \frac{\pi^2(\hat\Phi(1)\hat\Psi\bfrac12XY^{1/2}+\hat\Psi(1)\hat\Phi\bfrac12YX^{1/2})}{128\zeta_2(2)} \\
& \hspace*{1cm} +\lz \frac1{2\pi i}\pz^2\int\limits_{(3/4)}\int\limits_{(3/4+\varepsilon)}\frac {Z(s,w)}{\zeta_2(2s+2w-1)}X^wY^s\hat \Phi(w)\hat\Psi(s) \dif w \dif s +O_\varepsilon\lz YX^\delta+XY^\delta\pz,
\end{split}
\end{align}
where $\delta=-1/4+\varepsilon$ or $\delta=\varepsilon$ if GRH is true. \newline

  We further move the integral in \eqref{TheoremAfter2Shifts} over $w$ to the left. It follows from Proposition \ref{ExtensionOfZ(s,w)} that  except for the line $s+w=3/2$, the integrand has no poles if $\Re(s+w)\geq1$ or $\Re(s+w)>3/4$ under GRH.  Thus we have, upon setting $\delta'=1/4+\varepsilon$ or $\varepsilon$ if GRH is true,
\begin{align}
\label{ShiftingPast3/2}
\begin{split}
	S_{[i]} (X,Y; & \Phi, \Psi) \\
	=& \frac{\pi^2(\hat\Phi(1)\hat\Psi\bfrac12XY^{1/2}+\hat\Psi(1)\hat\Phi\bfrac12YX^{1/2})}{128\zeta_2(2)} +\lz \frac1{2\pi i}\pz^2\int\limits_{(3/4)}\int\limits_{(\delta')}\frac {Z(s,w)}{\zeta_2(2s+2w-1)}X^wY^s\hat \Phi(w)\hat\Psi(s) \dif w \dif s\\
&\hspace*{1cm} +\frac1{2\pi i}\int\limits_{(3/4)}X^{3/2-s}Y^{s}\hat\Phi\lz\tfrac32-s\pz\hat\Psi\lz s\pz\res_{\lz s,\frac32-s\pz}\frac {Z(s,w)}{\zeta_2(2s+2w-1)} \dif s +O_\varepsilon\lz YX^\delta+XY^\delta\pz, \\
=& \frac{\pi^2(\hat\Phi(1)\hat\Psi\bfrac12XY^{1/2}+\hat\Psi(1)\hat\Phi\bfrac12YX^{1/2})}{128\zeta_2(2)}  \\
&\hspace*{1cm} + \frac1{2\pi i}\int\limits_{(3/4)}X^{3/2-s}Y^{s}\hat\Phi\lz\tfrac32-s\pz\hat\Psi\lz s\pz\res_{\lz s,\frac32-s\pz}\frac {Z(s,w)}{\zeta_2(2s+2w-1)} \dif s +O_\varepsilon\lz YX^\delta+XY^\delta\pz,
\end{split}
\end{align}
  where the last estimation above follows from \eqref{h1bound}, \eqref{zetaest} and the polynomial bound of $Z(s,w)$ in vertical strips. \newline

  Lastly, to compute the residue in the last integral of \eqref{ShiftingPast3/2},  we set $\psi'=\psi_1$ in the functional equation given in \eqref{Zfcneqnpsi1} to see that
\begin{align}
\label{resZ}
\begin{split}
\res_{w=1}&Z(1-s,s+w-\half;\psi_1,\psi_1) \\
=& \frac{\pi^{1-2s}}{2} \frac{\Gamma(s)}{\Gamma(1-s)}\res_{w=1} \Big ( \frac{1-2^{-(1-s)}}{2(1-2^{-s})} Z(s,w;\psi_1,\psi_1(\psi_1 +\psi_{i})(\psi_1+ \psi_{1+i})) \\
& \hspace*{1cm} +\frac{1+2^{-(1-s)}}{2(1+2^{-s})} Z(s,w;\psi_1,\psi_1(\psi_1 +\psi_{i})(\psi_1- \psi_{1+i})) +2^{2s-1}Z(s,w;\psi_1, \psi_1(\psi_1-\psi_{i}) \Big ) \\
 =& \frac{\pi^{1-2s}}{2} \frac{\Gamma(s)}{\Gamma(1-s)} \res_{w=1} \Big ( \frac{1-2^{-(1-s)}}{2(1-2^{-s})}Z(s,w;\psi_1,\psi_1)+\frac{1+2^{-(1-s)}}{2(1+2^{-s})}Z(s,w;\psi_1,\psi_1) +2^{2s-1}Z(s,w;\psi_1, \psi_1) \Big ),
\end{split}
\end{align}
  where the last estimate above follows by noting that $w=1$ is a polar line of $Z(u,v;\psi,\psi')$ if and only if $\psi'=\psi_1$. \newline

  We apply Proposition \ref{ExtensionOfZ(s,w)} to compute the residue appearing on the last expression of \eqref{resZ} to see that
\begin{align*}
\begin{split}
\res_{w=1}Z(1-s,s+w-\half;\psi_1,\psi_1)
 =\frac {\pi^{2-2s}}{16} \zeta_2(2s) \frac{\Gamma(s)}{\Gamma(1-s)}  \Big ( \frac{1-2^{-(1-s)}}{2(1-2^{-s})}+\frac{1+2^{-(1-s)}}{2(1+2^{-s})} +2^{2s-1} \Big ).
\end{split}
\end{align*}

   We further make a change of variable $s \rightarrow 1-s$ to conclude from the above that for all $s\in \mc$,
\begin{align*}
\begin{split}
 \res_{\lz s,\frac32-s\pz} & Z(s,w) \\
=& \frac {\pi^{s}}{16} \zeta_2(2(1-s)) \frac {\Gamma(1-s)}{\Gamma(s)}  \Big ( \frac{1-2^{-s}}{2(1-2^{-(1-s)})} +\frac{1+2^{-s}}{2(1+2^{-(1-s)})} +2^{1-2s} \Big ) \\
=& \frac {\pi^{s}}{16} \zeta_K(2(1-s)) \frac {\Gamma(1-s)}{\Gamma(s)}  \Big ( \frac 12(1+2^{-(1-s)})(1-2^{-s})+\frac 12(1-2^{-(1-s)})(1+2^{-s}) +2^{1-2s}(1-2^{-2(1-s)}) \Big ) \\
=& \frac {\pi^{s}}{16} \zeta_K(2(1-s)) \frac {\Gamma(1-s)}{\Gamma(s)}  \Big ( \frac 12 +2^{1-2s}(1-2^{-2(1-s)}) \Big ) \\
=& \frac {\pi^{s}2^{1-2s}}{16} \zeta_K(2(1-s)) \frac {\Gamma(1-s)}{\Gamma(s)}.
\end{split}
\end{align*}

   We now apply the functional equation in \eqref{fneqnL} to see that
\begin{align*}
  \zeta_K(2(1-s)) =\pi^{(1-2(2s-1))} \frac{\Gamma(2s-1)}{\Gamma(1-(2s-1))} \zeta_K(2s-1)=\pi^{3-4s} \frac{\Gamma(2s-1)}{\Gamma(2-2s)}\zeta_K(2s-1) .
\end{align*}

We then conclude that
\begin{align*}
\begin{split}
 &\res_{\lz s,\frac32-s\pz}Z(s,w) =\pi^{3-3s}2^{-3-2s}\frac {\Gamma(1-s)\Gamma(2s-1)}{\Gamma(2-2s)\Gamma(s)}\zeta_K(2s-1).
\end{split}
\end{align*}

   It follows that the last integral on the right-hand side expression of \eqref{ShiftingPast3/2} is
\begin{align*}
\begin{split}
	\frac{X^{3/2}}{\zeta_2(2)}\int\limits_{(3/4)}\bfrac{Y}{X}^s\hat\Phi\lz\tfrac32-s\pz\hat\Psi(s)\pi^{3-3s}2^{-3-2s}\frac {\Gamma(1-s)\Gamma(2s-1)}{\Gamma(2-2s)\Gamma(s)}\zeta_K(2s-1) \dif s.
\end{split}
\end{align*}

	Combining the above with  \eqref{ShiftingPast3/2} now allows us to complete the proof of Theorem \ref{MainTheoremSmoothed}.

\vspace*{.5cm}

\noindent{\bf Acknowledgments.}   P. G. is supported in part by NSFC Grant 11871082 and L. Z. by the Faculty Silverstar Grant PS65447 at the University of New South Wales (UNSW).  The authors thank the anonymous referee for his/her careful reading of the manuscript.

\bibliography{biblio}
\bibliographystyle{amsxport}

\vspace*{.5cm}

\noindent\begin{tabular}{p{6cm}p{6cm}p{6cm}}
School of Mathematical Sciences & School of Mathematics and Statistics \\
Beihang University & University of New South Wales \\
Beijing 100191 China & Sydney NSW 2052 Australia \\
Email: {\tt penggao@buaa.edu.cn} & Email: {\tt l.zhao@unsw.edu.au} \\
\end{tabular}

\end{document}